\author[Godbole and Swickheimer]{Anant Godbole\thanks{Supported by NSF REU Grants 1852171 and 2150434}
  \and Hannah Swickheimer\thanks{Supported by NSF REU Grant 2150434}}
\title[An Alternative Proof for the Expected
Number]{An Alternative Proof for the Expected \\
Number of Distinct Consecutive Patterns in a \\ Random Permutation}
\affiliation{\center
  East Tennessee State University, Johnson City, Tennessee, U.S.A.
}
\keywords{permutation patterns, distinct patterns, random permutation}
\newtheorem{thm}{Theorem}[section]
\newtheorem{lem}[thm]{Lemma}
\def\p{\mathbb P}
\def\e{\mathbb E}
\def\l{\lambda}
\def\lr{\left(}
\def\rr{\right)}
\def\lc{\left\{}
\def\rc{\right\}}
\def\P{{\rm Po}}
\def\cl{{\mathcal L}}
\def\n{\noindent}
\def\be{\begin{equation}}
\def\ee{\end{equation}}
\def\tv{d_{TV}}
\newcommand{\beq}{\begin{eqnarray}}
\newcommand{\eeq}{\end{eqnarray}}
\begin{document}
\publicationdata
{vol. 26:1, Permutation Patterns 2023}{2024}{5}{10.46298/dmtcs.12458}{2023-10-24; 2023-10-24; 2024-03-09; 2024-03-19}{2024-03-19}
\maketitle
\begin{abstract}
Let $\pi_n$ be a uniformly chosen random permutation on $[n]$.  Using an analysis of the probability that two overlapping consecutive $k$-permutations are order isomorphic, the authors of [2] showed that the expected number of distinct consecutive patterns of all lengths $k\in\{1,2,\ldots,n\}$ in $\pi_n$ is $\frac{n^2}{2}(1-o(1))$ as $n\to\infty$.  This exhibited the fact that random permutations pack consecutive patterns near-perfectly.  We use entirely different methods, namely the Stein-Chen method of Poisson approximation, to reprove and slightly improve their result.
\end{abstract}

%

\section{Introduction}
 Let $\pi=\pi_n$ be a permutation on $[n]$.  The one-line notation will be used for permutations in this paper; e.g., (2134) is shorthand for $\pi(1)=2; \pi(2)=1; \pi(3)=3; \pi(4)=4.$  We say that $\pi$ {\it contains} a pattern $\mu=\mu_k$ of length $k$ if there are $k$ indices $n_1<n_2<\ldots <n_k$ such that $(\pi(n_1), \pi(n_2),\ldots,\pi(n_k))$ are in the same relative order as $(\mu(1),\mu(2),\ldots, \mu(k))$. We say that $\pi$ {\it consecutively contains} the pattern $\mu_k$  if there are $k$ consecutive indices $(m,m+1,\ldots,m+k-1)$ such that $(\pi(m), \pi(m+1),\ldots,\pi(m+k-1))$ are in the same relative order as $(\mu(1),\mu(2),\ldots, \mu(k))$. Let $\phi(\pi_n)$ be the number of distinct {\it consecutive} patterns of {\it all} lengths $k; 1\le k\le n$, contained in $\pi_n$.  We focus on the case where $\pi$ is a {\it uniformly chosen random permutation} on $[n]$, denote the random value of $\phi(\pi)$ by $X=X_n$, and study, in this paper, its expected value $\e(X)$.  

The authors of \cite{9pp} proposed and used two auxiliary variables, $Y=Y_n$  and $Z=Z_n$ as follows:
$Y=Y_n$ is the number of repeated patterns of any length in $\pi$, and $Y_{n}^{k}=Y^k$ the number of repeated patterns of length $k\in\{1,2,\ldots,n\}$, so that (for a strategically chosen $k_0$),

$$\e(X) \geq \displaystyle\sum_{k=k_0}^{n}(n-k+1) - \e(Y^k).$$  Also we let $Z=Z_n=\sum_k Z^k$ be the number of {\it pairs} of isomorphic patterns, with $Z^k$ being the number of pairs of isomorphic patterns of length $k$.  Generically let $\eta_1$ and $\eta_2$ be two sets of $k$ consecutive positions, and denote by $\rho_1, \rho_2$ the patterns that are present in these two sets of positions.  Thus 
\begin{equation}\e(Z^k) = \displaystyle\sum_{\eta_1}\sum_{\eta_2} \p(\rho_1^k \simeq \rho_2^k),\end{equation}
For example, with $k=7, n=15$, $\eta_1$ and $\eta_2$ might equal $\{1,2,3,4,5,6,7\}$ and $\{5,6,7,8,9,10,11\}$ and $\rho_1^7 \simeq \rho_2^7$ if, e.g., the pattern 7263514 occurs in both sets of positions.
\begin{lem}
    $Y^k \leq Z^k$ for all $k$. 
\end{lem}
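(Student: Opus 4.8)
The plan is to reduce the inequality to a statement about the multiplicities of the isomorphism classes of length-$k$ consecutive patterns, and then to verify it class by class. Fix a length $k$ and a permutation $\pi$. There are exactly $n-k+1$ windows of $k$ consecutive positions; reading off the pattern in each window partitions these windows into isomorphism classes $C_1,\dots,C_r$, where $r$ is the number of \emph{distinct} consecutive patterns of length $k$. I would write $m_i=|C_i|$ for the number of windows carrying the $i$-th pattern, so that $\sum_{i=1}^r m_i = n-k+1$.

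The first step is to express both quantities in these terms. A window contributes to $Y^k$ precisely when the pattern it carries has already appeared in an earlier window, so every class of size $m_i$ contributes $m_i-1$ repeated patterns; hence $Y^k=\sum_{i=1}^r(m_i-1)$. This is consistent with $(n-k+1)-Y^k=r$, the count that feeds the lower bound on $\e(X)$. Likewise, two distinct windows form an isomorphic pair exactly when they lie in the same class, so each class of size $m_i$ contributes $\binom{m_i}{2}$ pairs, giving $Z^k=\sum_{i=1}^r\binom{m_i}{2}$.

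The final step is the elementary inequality $\binom{m}{2}\ge m-1$, valid for every integer $m\ge 1$ since it rearranges to $(m-1)(m-2)\ge 0$. Summing over the classes then yields $Y^k=\sum_i(m_i-1)\le\sum_i\binom{m_i}{2}=Z^k$, with strict inequality as soon as some pattern is repeated three or more times. One can phrase the same comparison without any arithmetic: choosing the leftmost window in each class as a representative and sending every other window in the class to the unordered pair consisting of it and the representative defines an injection from the repeated occurrences counted by $Y^k$ into the isomorphic pairs counted by $Z^k$.

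The only real subtlety — and thus the main thing to get right — is the bookkeeping in the definitions rather than any hard estimate. I must make sure $Y^k$ is read as the number of excess occurrences $\sum_i(m_i-1)$ (equivalently, the windows whose pattern is not new), rather than the number of classes that happen to repeat, and that $Z^k$ counts pairs of windows. With those conventions pinned down the result is immediate; were $Z^k$ instead to count ordered pairs, $Z^k=\sum_i m_i(m_i-1)$ and the bound $Y^k\le Z^k$ would be even more transparent.
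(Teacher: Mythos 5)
Your proof is correct and follows essentially the same route as the paper's: both arguments reduce to observing that a pattern occurring $m$ times contributes $m-1$ to $Y^k$ but $\binom{m}{2}$ to $Z^k$, and $\binom{m}{2}\ge m-1$. Your version merely formalizes the paper's informal tally with explicit class multiplicities $m_i$, which is a welcome tightening but not a different idea.
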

\begin{proof}Counting $Y^k$ is equivalent to listing the patterns of length $k$, and starting a tally for any repeats among the $\min\{(n-k+1),k!\}$ patterns that we actually observe.   On the other hand, $Z^k$ increases by one for every pair of the same pattern which means that it also increases more than $Y^k$ if you find more than two of a pattern. 
    For example, if you have three occurrences of the same pattern then $Y^k$ would count the 2 repeats and $Z^k$ would count the ${3\choose 2}$ pairs of repeats. \end{proof}

\bigskip

Since
    $Y^k \leq Z^k$ for all $k$, we have
\begin{equation} \e(X)\geq \sum_{k_0}^n (n-k+1)-\e(Z^k), \end{equation} and, after substantial analysis, to the result in \cite{9pp} that

\begin{eqnarray*}\e(X_n)\ge
\frac{(n-\lceil100\ln n\rceil)^2}{2}&=&\frac{n^2}{2}\lr1-200\frac{\ln n}{n}+\frac{10000\ln^2 n}{n^2}\rr\\
&\ge&\frac{n^2}{2}\lr1-200\frac{\ln n}{n}\rr,
\end{eqnarray*}
for large enough $n$.
In Section 2, we will use the Stein-Chen method of Poisson approximation \cite{bhj} to (slightly) improve the above bound when we prove
\begin{thm} For sufficiently large $n$,
\[\e(X_n)\ge
\frac{n^2}{2}\lr1-\frac{17\ln n}{n}\rr.
\]
\end{thm}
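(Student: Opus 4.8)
The plan is to bound $\e(X_n)$ from below via the identity $\e(X_n) \ge \sum_{k=k_0}^n (n-k+1) - \e(Z^k)$, already established in the excerpt, by producing a sharp upper bound on $\e(Z^k)$ for each length $k$ in a suitable range. The main term $\sum_{k=k_0}^n (n-k+1)$ is essentially $\frac{(n-k_0+1)^2}{2}$, so to beat the constant $200$ in the earlier bound and replace it with $17$ we must choose $k_0$ as small as possible while still controlling the total error $\sum_{k=k_0}^n \e(Z^k)$. Thus the whole game is: show that the patterns of length $k$ are, with high probability, all distinct once $k$ is moderately large, and quantify the failure probability tightly enough that summing over $k$ costs only a linear-in-$n$ (times $\ln n$) correction.

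For the core estimate I would invoke the Stein--Chen method \cite{bhj} rather than a direct union bound. First I would set up the dependency structure: for a fixed $k$, index by pairs $(\eta_1,\eta_2)$ of $k$-windows the indicator events $\{\rho_1^k \simeq \rho_2^k\}$, and recognize $Z^k$ as a sum of (dependent) indicators. The pairwise match probability $\p(\rho_1^k \simeq \rho_2^k)$ splits into two regimes: \emph{non-overlapping} windows, where the two patterns are independent and the probability of order-isomorphism is exactly $1/k!$; and \emph{overlapping} windows, where one must bound the probability that two overlapping consecutive $k$-patterns coincide — this is the quantity analyzed combinatorially in \cite{9pp}. I would then assemble the Stein--Chen parameters $b_1$ (the ``self'' term from the local neighborhood) and $b_2$ (the dependent-pair term), show the Poisson mean $\l_k := \e(Z^k)$ is small, and conclude $\p(Z^k \ge 1) \le \e(Z^k) + (\text{Stein--Chen correction})$, so that $\e(X_n) \ge \sum_{k=k_0}^n (n-k+1)\bigl(1 - \p(Z^k\ge 1)\bigr)$, or more directly just use $\e(X_n) \ge \sum (n-k+1) - \sum \e(Z^k)$.

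The key quantitative steps, in order, are: (1) bound the non-overlapping contribution to $\e(Z^k)$ by $\binom{n-k+1}{2}\frac{1}{k!} \le \frac{n^2}{2\,k!}$, which is super-exponentially small and sums harmlessly over $k$; (2) bound the overlapping contribution, where there are only $O(nk)$ overlapping window-pairs and the match probability decays like a reciprocal factorial in the overlap length, again yielding a small sum; (3) choose $k_0 = \lceil c\ln n\rceil$ with $c$ tuned so that $k_0! \gg n^2$, guaranteeing $\sum_{k\ge k_0} \e(Z^k) = o(1)$, so that $\sum_{k=k_0}^n(n-k+1)$ dominates; and (4) expand $\frac{(n-k_0+1)^2}{2} = \frac{n^2}{2}\bigl(1 - \frac{k_0-1}{n}\bigr)^2 \ge \frac{n^2}{2}\bigl(1 - \frac{2(k_0-1)}{n}\bigr)$ and verify that with $c\ln n$ chosen near $8.5$ the leading correction is $\frac{17\ln n}{n}$. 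The crux is step (2)/(3): since Stirling gives $k!\ge (k/e)^k$, requiring $k_0!\ge n^2$ needs only $k_0 \approx \frac{2\ln n}{\ln\ln n}(1+o(1))$, far smaller than $\ln n$; the binding constraint is therefore not the tail sum $\sum \e(Z^k)$ but the arithmetic of making $2(k_0-1)/n$ land below $17\ln n / n$, i.e. $k_0 \le \tfrac{17}{2}\ln n + O(1)$.

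The step I expect to be the main obstacle is the overlapping-windows probability bound: two consecutive $k$-patterns sharing $j$ positions are \emph{not} independent, and computing or cleanly upper-bounding $\p(\rho_1^k \simeq \rho_2^k)$ as a function of the overlap is exactly the delicate combinatorial point that required ``substantial analysis'' in \cite{9pp}. My hope is that the Stein--Chen framework lets me be lossy here — I only need that the overlapping contribution is $o(1)$ after summing, not a sharp asymptotic — so a crude bound such as $\p(\rho_1^k \simeq \rho_2^k) \le 1/(k-j)!$ for overlap $j$, combined with the fact that overlapping pairs number only $O(n)$ per shift, should suffice. If that crude bound is too lossy to keep $k_0 \le \frac{17}{2}\ln n$, I would refine it using the independence of the two ``exposed'' tails of the windows, which is precisely where the Poisson-approximation bookkeeping of $b_1, b_2$ pays off over a naive first-moment argument.
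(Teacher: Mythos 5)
Your proposal has a genuine gap, and it sits exactly where you predicted it would: the overlapping-window pairs. The crude bound $\p(\rho_1^k \simeq \rho_2^k) \le 1/(k-j)!$ for overlap $j$ (equivalently $1/s!$ for shift $s = k-j$) is vacuous precisely for the dominant pairs: at shift $s=1$ it gives $1$, and there are roughly $n$ such pairs for each length $k$, so your step (2)/(3) conclusion that $\sum_{k\ge k_0}\e(Z^k)=o(1)$ does not follow — the near-complete overlaps alone would contribute an error of order $n$ per value of $k$ under your bound. The fallback you suggest (independence of the exposed tails) yields the same $1/s!$ and fails identically. What is actually required is a combinatorial analysis of self-overlapping patterns; this is the paper's Lemma 2.1, which shows that for a \emph{fixed} pattern and overlap $r$, $\p(I_jI_i=1)\le 2^{2k-2r}/(2k-r)!$ (for instance, at shift $1$ only the two monotone patterns can occupy both windows, giving probability $2/(k+1)!$ where your bound gives $1$). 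Moreover, even with that lemma in hand, the overlapping contribution summed over all $k!$ patterns is of order $n/k$ per length $k$, hence of order $n\ln n$ in total — it is never $o(1)$, and it is exactly this term that produces the $\ln n/n$ correction in the theorem. Consequently your derivation of the constant $17$ by tuning $k_0 \approx \frac{17}{2}\ln n$ is reverse-engineered rather than forced, and it is internally inconsistent with your own (correct) observation that $k_0 \approx 2\ln n/\ln\ln n$ would suffice for the non-overlapping tail: if your claimed bounds were true, you would have proved something strictly stronger than the theorem, which should have been a warning sign.

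There is also a structural mismatch with the paper that explains why your use of Stein--Chen is decorative. In the inequality $\e(X)\ge \sum_k (n-k+1)-\e(Z^k)$, only the first moment of $Z^k$ enters, so Markov's inequality already does everything Poisson approximation could do there; this is essentially the scheme of \cite{9pp}, which the paper explicitly abandons. The paper instead applies Stein--Chen to $U_{k,j}$, the number of occurrences of each fixed pattern $N_j$, writing $\e(X^k)=\sum_{j=1}^{k!}\p(U_{k,j}\ge 1)$ and using the total-variation bound of \cite{bhj} to get $\p(U_{k,j}\ge 1)\ge (1-e^{-\lambda})-\varepsilon_{n,k}$ with $\lambda=(n-k+1)/k!$. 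The local-neighborhood terms in that bound are where the overlap estimate (Lemma 2.1) is consumed, producing a correlation error of $8n/(k-3)$ per length $k$; the cutoff can then be taken sub-logarithmic, $b_n = 4\ln n/\ln\ln n$, and the constant $17$ arises mainly as $16\ln n/n$ from $\sum_{k\ge b_n} 8n/(k-3)\le 8n\ln n$, plus lower-order terms from $b_n$ — not from the cutoff, as in your reconstruction. To repair your argument you would need to import Lemma 2.1 (or the overlap analysis of \cite{9pp}) wholesale, at which point the $Z$-based route becomes a variant of the old proof rather than an alternative to it.
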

The key difference, besides the use of an entirely different technique, is that fact that we work with the $X$ variable directly, without involving $Y$ and $Z$.  The Stein-Chen method has been applied in several Combinatorics situations in \cite{bhj}; see also, e.g., \cite{nn+1} and \cite{thresh}, where examples are given of the use of the technique in the context of permutations.  Within the domain of Poisson approximation, moreover, we are using it in this paper when the mean of the underlying Poisson distribution is exceptionally small.  This too is unusual.

\noindent {\it Remark on the Non-Consecutive Case}:  In the non-consecutive case, the conjecture is that $\e(X)\sim 2^n$ (rather than $\e(X)\sim \frac{n^2}{2}$), so that once again the expected value of $X$ would be close to its maximum.  When addressing this  conjecture in \cite{6pp}, sole use is made of $X$ when using subadditivity arguments, while the $X,Y,Z$ trifecta is used to get close to proving the conjecture.  Finally the sheer magnitude of the dependencies makes use of Poisson approximation techniques inappropriate in the non-consecutive case.  The same is true of central limit theorems and martingale inequalities.
\section{Poisson Approximation and Consecutive Patterns}
\label{sec:first}

\bigskip

\noindent {\it Proof of Theorem 1.2}:  Since a pattern adds to the tally of distinct patterns if and only if it appears at least once, our key variable $X^k$, the number of distinct patterns of length $k$, can be written as
\[ X^k=\sum_{j=1}^{k!}I({\rm the}\ j^{\rm th}\ {\rm pattern}\ N_j\ {\rm of}\ {\rm length}\ k\ {\rm appears}\  {\rm at}\ {\rm least}\ {\rm once}),\] where $I(A)=1$ iff $A$ occurs ($I(A)=0$ otherwise).  The notation supposes that we have listed the patterns of length $k$ in some fashion, perhaps lexicographically, and label the $j$th pattern as $N_j$.  Thus, 
the expected number of distinct patterns of length $k$ is 
\beq\e(X^k)&=&\sum_{j=1}^{k!}\p({\rm the}\ j^{\rm th}\ {\rm pattern}\ N_j\ {\rm of}\ {\rm length}\ k\ {\rm appears}\ {\rm at}\ {\rm least}\ {\rm once}),\nonumber\\
&=&\sum_{j=1}^{k!}\p(U_{k,j}\ge 1),\eeq
where $U_{k,j}$ is the number of occurrences of the $j$th pattern.  Our analysis will actually bypass the question of {\em which} the $j$th pattern is, and the strategy will be to show that for any $j$,
\be\cl(U_{k,j})\approx\P(\e(U_{k,j})),\ee where for any variable $T$ we denote the distribution of $T$ by $\cl(T)$, and the Poisson variable with parameter $\l$ by $\P(\l)$.    Note that $\e(U_{k,j})=(n-k+1)/k!=\l$ for each $j$.   If (4) were to be shown to be true by proving that
\be
\tv(\cl(U_{k,j}), \P(\l)):=\sup_{A\subseteq {\mathbb Z}^+}\vert\p(U_{k,j}\in A)-\sum_{j\in A}\frac{e^{-\l}\l^j}{j!}\vert\le\varepsilon_{n,k}\to0,
\ee
where $\varepsilon_{n,k}$ does not depend on the pattern, it would follow that for each $j$,
\be \p(U_{k,j}\ge 1)\ge(1-e^{-\l})-\varepsilon_{n,k},\ee
and thus via (3) that 
\be\e(X^k)\ge k!((1-e^{-\l})-\varepsilon_{n,k}).
\ee
We have that 
\be U_{k,j}=\sum_{j=1}^{n-k+1} I_j,\ee
where $I_j$ is the indicator variable that equals 1 if the pattern in question appears in the $k$ places $\{j,j+1,\ldots, j+k-1\}$ starting at $j$.  Also, $I_j$ is independent of the ensemble of $I_\ell$'s whose windows do not intersect those of $I_j$.  Thus Corollary 2.C.5 in \cite{bhj} indicates that
\begin{eqnarray}
&&\tv(\cl(U_{k,j}), \P(\l))\le\frac{1-e^{-\l}}{\l}\cdot\nonumber\\
&&\lr\sum_j\p^2(I_j=1)+\sum_j\sum_{i=j-k+1}^{j+k-1}[\p(I_jI_i=1)+\p(I_j=1)\p(I_i=1])\rr.\nonumber\\
&&
\end{eqnarray}
Since $\p(I_j=1)=\frac{1}{k!}$ for each $j$, we have that $\l=\frac{n-k+1}{k!}$.  Using this fact and bounding $\frac{1-e^{-\l}}{\l}$ by 1, (9) reduces to
\begin{eqnarray}
&&\tv(\cl(U_{k,j}), \P(\l))\nonumber\\ &\le&\frac{(n-k+1)}{k!^2}+\sum_j\sum_{i=j-k+1}^{j+k-1}\lr\p(I_jI_i=1)+\frac{1}{k!^2}\rr\nonumber\\
&\le&\frac{(n-k+1)}{k!^2}+\frac{2(n-k+1)k}{k!^2}+\sum_j\sum_{i=j-k+1}^{j+k-1}\p(I_jI_i=1).\nonumber\\
\end{eqnarray}
Equations (7) and (10) thus give
\begin{eqnarray}
\e(X^k)\nonumber&\ge&k!((1-e^{-\l})-\varepsilon_{n,k})\nonumber\\
&\ge& k!\cdot \bigg((1-e^{-\l})-\frac{(n-k+1)}{k!^2}-\nonumber\\
&&\frac{2(n-k+1)k}{k!^2}-\sum_j\sum_{i=j-k+1}^{j+k-1}\p(I_jI_i=1)\bigg).\end{eqnarray}
We deal separately with the four terms in (11):  

First note that
\begin{eqnarray}
k!(1-e^{-\l})&\ge& k!\frac{\l}{\l+1}\nonumber\\&=&k!\frac{n-k+1}{(\l+1)k!}\nonumber\\&=&\frac{(n-k+1)}{1+\frac{n-k+1}{k!}}\nonumber\\&\ge&(n-k+1)(1-{\frac{n-k+1}{k!}})\nonumber\\
&=&(n-k+1)-\frac{(n-k+1)^2}{k!}.
\end{eqnarray}
We retain the $(n-k+1)$ term in (12) for a later analysis and bound the second term as follows:
\be\frac{(n-k+1)^2}{k!}\le n^2\lr\frac{e}{k}\rr^k,\ee
where we have used the bound $1/k!\le(e/k)^k$.  
Second, in a similar fashion, we have for the second term in (11),
\be\frac{(n-k+1)}{k!}\le n\lr\frac{e}{k}\rr^k\ee
Thirdly, again in a similar fashion, we bound $2(n-k+1)k/k!$ by $2n^2/k!$ and obtain 
\be\frac{2(n-k+1)k}{k!}\le 2n^2\lr\frac{e}{k}\rr^k.\ee  This leaves us with the need to conduct an analysis of 
\[k!\cdot\sum_{j=1}^{n-k+1}\sum_{i=j-k+1}^{j+k-1}\p(I_jI_i=1).\]  In fact, such a correlation analysis is critical to the proof of Theorem 1.2 via the Stein-Chen method.
We thus {\it pause the proof of Theorem 1.2} to compute $\p(I_jI_i=1)$ for windows with an overlap of $r$, and the next critical result provides a bound.
\begin{lem} For windows beginning at $i$ and $j$ such that
\[\{j,j+1,\ldots, j+k-1\}\cap\{i,i+1,\ldots, i+k-1\}=r,\] we have
\be\p(I_jI_i=1)\le\frac{2^{2k-2r}}{(2k-r)!}.\ee
\end{lem}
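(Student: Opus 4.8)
The plan is to turn this probability into a pure counting problem about relative orders. Assume without loss of generality that the window starting at $j$ precedes the one starting at $i$, and let $\mu$ denote the common fixed pattern of length $k$ carried by both indicators. The union of the two windows occupies $2k-r$ positions, which I decompose into three blocks: the $k-r$ positions $A$ lying only in the first window, the $r$ overlap positions $B$, and the $k-r$ positions $C$ lying only in the second window. Since $\pi_n$ is uniform, the relative order of its values on any fixed set of $2k-r$ positions is uniform over all $(2k-r)!$ linear orders. Hence $\p(I_jI_i=1)$ equals the number of linear orders on $A\cup B\cup C$ that induce $\mu$ on the first window $A\cup B$ and $\mu$ on the second window $B\cup C$, divided by $(2k-r)!$, and the task reduces to bounding that count by $2^{2k-2r}$.

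The core of the argument is to identify exactly which pairwise comparisons are forced and which remain free. Fixing the pattern on $A\cup B$ forces all comparisons internal to $A$, internal to $B$, and between $A$ and $B$; fixing the pattern on $B\cup C$ forces all comparisons internal to $C$, between $C$ and $B$, and (again) internal to $B$. The only comparisons left undetermined are those between an element of $A$ and an element of $C$, and even these are constrained by transitivity through $B$: each $A$-element and each $C$-element is pinned into one of the $r+1$ gaps determined by the $B$-values, so only the interleaving of $A$- and $C$-elements falling in the \emph{same} gap is free. If the two windows impose incompatible orders on $B$ (that is, if $\mu$ lacks the relevant self-overlap), no valid order exists and $\p(I_jI_i=1)=0$, so the bound holds trivially.

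I would then record the count as a product over the $r+1$ gaps of $\binom{a_g+c_g}{a_g}$, where $a_g$ and $c_g$ are the numbers of $A$- and $C$-elements forced into gap $g$, subject to $\sum_g a_g=\sum_g c_g=k-r$. Bounding each factor by the elementary inequality $\binom{m}{\ell}\le 2^m$ and multiplying gives $2^{\sum_g(a_g+c_g)}=2^{2(k-r)}=2^{2k-2r}$; dividing by $(2k-r)!$ then yields the stated bound.

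The step I expect to demand the most care is the bookkeeping of the previous paragraph: justifying rigorously that the valid global orders are in bijection with the independent within-gap interleavings — that everything outside those interleavings is genuinely forced by the two window constraints, and that every choice of interleavings really does extend to a valid order on $A\cup B\cup C$. Once this bijection is pinned down, the binomial bound finishes the proof at once, and the whole estimate is in fact tight up to the crude replacement of each $\binom{a_g+c_g}{a_g}$ by $2^{a_g+c_g}$.
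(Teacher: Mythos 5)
Your proposal is correct and follows essentially the same route as the paper: after reducing to a count of linear orders on the $2k-r$ union positions, your gap-by-gap interleaving count $\prod_g \binom{a_g+c_g}{a_g}$ is exactly the paper's product of binomials $\binom{u_{s+1}+l_{s+1}-(u_s+l_s)-2}{u_{s+1}-u_s-1}$ (with the overlap values forced, in the paper's notation, to $u_s+l_s-s$), and both arguments conclude with the same crude bound $\binom{m}{\ell}\le 2^m$, telescoping to the numerator $2^{2k-2r}$ over $(2k-r)!$. Your $(r+1)$-gap bookkeeping is if anything slightly tidier, since it makes explicit the topmost gap and the degenerate case $\p(I_jI_i=1)=0$ for incompatible self-overlaps, both of which the paper treats only implicitly.
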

\begin{proof}
Our proof is similar to an argument used in \cite{6pp}.  It is clear that {the patterns in the overlap positions must be isomorphic in order for the pattern in question to exist in both windows}.  For example we could have the pattern being 53412 with $r=3$ (in this case the overlap pattern is 312 for both windows.  The easiest example is for the pattern to be monotone (in this case any $r$ works).  However the pattern cannot be 21534 with $r=2$.  (16) provides a uniform upper bound on $\p(I_jI_i=1)$ that does not pay heed to the fact that $\p(I_jI_i=1)$ might equal zero for some patterns.

To prove (16), we will find a bound on how many ways the numbers between 1 and $2k-r$ can be assigned so that the patterns exist in both windows.  This would yield the numerator in (16).  The denominator of (16), namely $(2k-r)!$, is obvious.

Consider the example below:

\bigskip

    \[ \begin{array}{ccccccccc}
        6 & 5 & 2 & 4 & 3 & 1 & & &  \\
        &&&6&5&2&4&3&1\cr
    \end{array} \]
    \vspace{0.5cm}
\centerline {FIGURE 1:  Consistent Overlap Pattern}

The overlap is a 321 pattern.  The number in the ``1" position has to be 1+2-1=2, since one number in the second occurrence of 652431 has to be assigned and must be lower than it.  Similarly, 3+5-2=6 must be the number associated with the ``2" position.  This is because 5 numbers lower than it still need an assignment.  Finally, the number in the ``3" position must be 4+6-3=7.  In general if the overlap is of magnitude $r$ then the numbers allotted to the $1,2,\ldots,r$ ranks must be, respectively $u_1+l_1-1, u_2+l_2-2,\ldots, u_r+l_r-r$, where the $u_i$'s and $l_i$'s are the upper and lower values of the ranks.  These numbers are determined as indicated.  Now for the rest of the numbers:  

Using the same notation as above, the $u_1+l_1-2$ `low' numbers need to be assigned, $u_1-1$ to the smaller numbers in $\pi_1$ and $l_1-1$ to the smaller numbers in $\pi_2$.  Since it doesn't matter how we do this, there are $ {{u_1+l_1-2}\choose{u_1-1}}$ choices.

The same process is repeated for the positions between the `1' and the `2' in the two patterns.  This can be done in $ {{(u_2+l_2)-(u_1+l_1)-2}\choose{u_2-u_1-1}}$  ways.  In general, when we look at the choices of numbers for the positions between the $``s"$ and the $``s+1"$ in the two patterns, there are 
\[{{(u_{s+1}+l_{s+1}-(s+1))-(u_s+l_s-s)-1)}\choose{u_{s+1}-u_s-1}}\]\[={{u_{s+1}+l_{s+1}-(u_s+l_s)-2}\choose{u_{s+1}-u_s-1}}\]
possibilities.  One choice for the values in the non-overlap positions (for the example in Figure 1) is shown in Figure 2.  The only choice here is for the 3, 4, and 5 numbers; we chose to allot the 3 to the top occurrence and 5 and 4 to the bottom occurrence (in that order).

\[ \begin{array}{ccccccccc}
        6 & 5 & 2 & 4 & 3 & 1 & & &  \\
        &&&6&5&2&4&3&1\cr
{\bf 9}&{\bf 8}&{\bf 3}&7&6&2&{\bf 5}&{\bf 4}&{\bf 1}
    \end{array} \]
    \vspace{0.5cm}
\centerline {FIGURE 2:  Assignment of Numbers to Consistent Overlaps}

\n We next use the crude (but adequate) bound ${a\choose b}\le2^a$ on each term above to get
\[ {{u_1+l_1-2}\choose{u_1-1}}\le 2^{{u_1+l_1-2}};\]
\[{{u_{s+1}+l_{s+1}-(u_s+l_s)-2}\choose{u_{s+1}-u_s-1}}\le 2^{{u_{s+1}+l_{s+1}-(u_s+l_s)-2}}\quad(2\le s\le r)\]
Collapsing the resulting telescoping product, we get the numerator of $2^{2k-2r}$; notice how the ``-2" terms in the exponent collect.  
This completes the proof of Lemma 2.1.\end{proof}

\bigskip

\n
Continuing with the proof of Theorem 1.2, the expression below (15) simplifies as 
\begin{eqnarray}k!\cdot\sum_{j=1}^{n-k+1}\sum_{i=j-k+1}^{j+k-1}\p(I_jI_i=1)&\le&2k!\cdot n\sum_{r=1}^{k-1}\p(I_1I_{k+1-r}=1)\nonumber\\
&\le&2k!\cdot n\lc\frac{4}{(k+1)!}+\frac{4^2}{(k+2)!}+\ldots\rc\nonumber\\
&\le&2k!\cdot n\frac{4}{(k+1)!}\lr 1+\frac{4}{(k+1)}+\frac{4^2}{(k+1)^2}\ldots\rc\nonumber\\
&=& \frac{8n}{k-3}.
\end{eqnarray}
Together, Equations (11) through (17) give us the fact that for fixed $k$, 
\be \e(X^k)\ge (n-k+1)-(3+o(1))n^2\lr\frac{e}{k}\rr^k-\frac{8n}{k-3}.
\ee 

Let us consider the last two terms on the right side of (18).  First
\[(3+o(1))n^2\lr\frac{e}{k}\rr^k\le 4n^2\lr\frac{e}{k}\rr^k\le\frac{1}{n}\]
if
\be\log 4+3\log n+k-k\log k\le0,\ee
so that (19) holds if, e.g., $k\ge 4\frac{\log n}{\log\log n}=b_n$.  The second term to evaluate is $8n/(k-3)$ where we'll now assume that $k\ge b_n$.  We get
\begin{eqnarray}
\e(X)&\ge&\sum_{k\ge b_n}\e(X^k)\nonumber\\
&\ge&\sum_{k\ge b_n}\lr(n-k+1)-\frac{1}{n}-\frac{8n}{k-3}\rr\nonumber\\
&\ge&\lr\sum_{k=1}^{n-b_n} k\rr-1-8n\sum_{k\ge b_n}\frac{1}{k-3}\nonumber\\
&\ge&\frac{(n-b_n)^2}{2}-1-8n\log n\nonumber\\
&=&\frac{n^2}{2}\lr1-\frac{b_n}{n}\rr^2-1-8n\log n\nonumber\\
&\ge&\frac{n^2}{2}\lr1-\frac{2b_n}{n}-\frac{2}{n^2}-\frac{16\log n}{n}\rr\nonumber\\
&\ge&\frac{n^2}{2}\lr1-\frac{17\log n}{n}\rr,
\end{eqnarray}
for $n$ large enough, where the last bound in (20) is obtained by writing $b_n =4\frac{\log n}{\log\log n}$.
Equation (20) establishes Theorem 1.2.\hfill\qed
\section{Open Questions}  

\indent\indent(a) Can we gain a deeper insight into the concentration of $X$ around $\e(X)$ by estimating the variance of $X$? 

(b) Might we be able to improve Theorem 1.2 by employing a tighter proof, in particular, by not assuming that all patterns are compatible for any overlap?   Some of the ideas in Borga and Penaguiao \cite{bp1} and \cite{bp2} might help regarding this question.
\section{Acknowledgments}  This research was supported by NSF REU Grants 1852171 and 2150434, and a preliminary version was presented at Permutation Patterns 2022 in Valparaiso, Indiana.  The work has benefited greatly by the suggestions made by the two referees.  One remark, in particular, led to the  uncovering of an error in the previous version.
%
%
%
%
%
\bibliographystyle{plainnat}

\end{document}